\def\RR{\mathbb{R}}
\def\ZZ{\mathbb{Z}}
\def\CC{\mathbb{C}}
\def\ra{{\rightarrow}}
\def\del{\partial}
\def\odd{{\text{odd}}}
\DeclareMathOperator{\ind}{ind}
\DeclareMathOperator{\Fix}{Fix}
\DeclareMathOperator{\Per}{Per}
\theoremstyle{plain}
\newtheorem{thm}{Theorem}[section]
\newtheorem{prop}[thm]{Proposition}
\newtheorem{lemma}[thm]{Lemma}
\newtheorem{cor}[thm]{Corollary}
\theoremstyle{definition}
\newtheorem{dfn}[thm]{Definition}
\newtheorem{qtn}[thm]{Question}
\newtheorem{rmk}[thm]{Remark}
\def\RR{\mathbb{R}}
\def\ZZ{\mathbb{Z}}
\def\CC{\mathbb{C}}
\def\DD{\mathbb{D}}
\def\ra{{\rightarrow}}
\def\del{\partial}
\def\odd{{\text{odd}}}
\newcommand\reallywidecheck[1]{%
\savestack{\tmpbox}{\stretchto{%
  \scaleto{%
    \scalerel*[\widthof{\ensuremath{#1}}]{\kern-.6pt\bigwedge\kern-.6pt}%
    {\rule[-\textheight/2]{1ex}{\textheight}}
  }{\textheight}%
}{0.5ex}}%
\stackon[1pt]{#1}{\scalebox{-1}{\tmpbox}}%
}
\begin{document}

\pagestyle{plain}
\thispagestyle{plain}

\title[]{Area-preserving diffeomorphisms on the  disk and positive hyperbolic orbits}
\author[Masayuki ASAOKA]{Masayuki ASAOKA}\address[Masayuki Asaoka]{Faculty of Science and Engineering, Doshisha University,
 1-3 Tatara Miyakodani, Kyotanabe 610-0394, JAPAN.}
\email{masaoka@mail.doshisha.ac.jp}

\author[Taisuke SHIBATA]{Taisuke SHIBATA}
\address[Taisuke Shibata]{Research Institute for Mathematical Sciences, Kyoto University, Kyoto 606-8502,
JAPAN.}
\email{shibata@kurims.kyoto-u.ac.jp}

\date{\today}

\begin{abstract}
In this paper, we prove that if an area-preserving non-degenerate diffeomorphism on the open disk which extend smoothly to the boundary with non-degeneracy has at least 2 interior periodic  points, then there are  infinitely many positive hyperbolic  periodic points on the interior. As an application, we prove that if  a non-degenerate universally tight contact 3-dimentional lens space has a Birkhoff section of disk type and at least 3 simple periodic orbits, there are infinitely many simple  positive hyperbolic  orbits. In particular, we have that a non-degenerate dynamically convex contact 3-sphere has either  infinitely many simple  positive hyperbolic  orbits or exactly two simple elliptic orbits, which gives a refinement of the result proved by Hofer, Wysocki and Zehnder in \cite{HWZ2} under non-degeneracy.
\end{abstract}

\maketitle

\section{Introduction and the results}
\subsection{Introduction}
Area-preserving diffeomorphisms on the open annulus or the open disk  have been studied and  play important roles in 3-dimentional dynamics. They frequently arise as return maps on Birkhoff sections. For instance,  J. Franks \cite{Fr2,Fr3} showed that an area-preserving homeomorphism of the open 
annulus which has at least one periodic point  has infinitely many interior periodic points and as an application,  proved that every smooth Riemannian metric on $S^{2}$ with positive scalar curvature has infinitely many distinct closed geodesics. 
In the context of 3-dimensional Reeb flows, Hofer, Wysocki and Zehnder  \cite{HWZ2}  constructed a Birkhoff section of disk type  from a $J$-holomorphic curve  in a dynamically convex contact 3-sphere and proved by applying the Franks' result and Brouwer’s translation theorem that there are either  infinitely many simple periodic orbits or exactly two simple periodic orbits in a dynamically convex contact 3-sphere. 
The primary motivation of the paper is to refine the result of  Hofer, Wysocki and Zehnder and study periodic orbits in more detail. In particular, Our first theorem (Theomrem \ref{thm:main}) leads to the existence of infinitely many periodic orbits with the specific types called poisitive hyperboic if there are at least 3 periodic orbits.

\subsection{Diffeomorphisms on the disk}
Let $\Sigma$ be a surface. For a diffeomorphism $f$ of $\Sigma$, we call $p\in \Sigma$ a periodic point  with period $n(>0)$ if $p=f^{n}(p)$ and in addition $p\neq f^{m}(p)$ for any $0<m<n$. $f$ is called non-degenerate if for any $n$ and any  fixed point $p$ of $f^{n}$, the map $df^{n}:T_{p}\Sigma \to T_{p}\Sigma$ has no eigenvalue 1.

Consider a volume form $\omega$ on $\Sigma$. Let $f$ be a diffeomorphism  of $\Sigma$ with $f^{*}\omega=\omega$. A periodic point $p$ with period $n$ is called positive (resp. negative) hyperbolic if the eigenvalues of $df^{n};T_{p}\Sigma \to T_{p}\Sigma$ are positive (resp. negative) real numbers and elliptic if the eigenvalues of $df^{n};T_{p}\Sigma \to T_{p}\Sigma$ are of length 1. We note that since $f$ is area-preserving, any periodic point is either positive/negative hyperbolic or elliptic and if $f$ is non-degenerate, the conditions do not overlap each other.

Let $\DD$ be the closed unit disk and $\mathring{\DD}$ the interior. According to \cite{Fr2,Fr3,HWZ2}, it follows that an area and orientation preserving map on $\mathring{\DD}$ with finite area has either exactly two peiodic points or infinitely many periodic orbits. 

Our first  result is as follows.
\begin{thm}\label{thm:main}
Let  $\omega$ be a volume 2-form on $\mathring{\DD}$ with $\int_{\mathring{\DD}}\omega<+\infty$.
 Let $f$ be a non-degenerate diffeomorphism on $\DD$. If $f$ satisfies $f^{*}\omega=\omega$ and has  at least two periodic points on $\mathring{\DD}$, then the number of  positive hyperbolic periodic points on $\mathring{\DD}$ is infinite.
\end{thm}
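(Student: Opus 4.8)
The plan is to pass to the annular setting and detect, for each realized rotation number, one periodic orbit of fixed-point index $-1$; since a non-degenerate orbit has index $-1$ precisely when it is positive hyperbolic, infinitely many distinct rotation numbers will yield infinitely many positive hyperbolic points. The point of localizing to a single rotation number is that all the relevant orbits then have the \emph{same} period $q$, so the index reads off the type directly, with none of the sign ambiguity that plagues a crude count of $\Fix(f^N)$ over large $N$.

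First I would record the index--type dictionary. For an orientation-preserving map $g$ and a non-degenerate fixed point $x$, the fixed-point index is $\mathrm{sign}\det(\Id-dg_x)$. Writing the eigenvalues of $dg_x$ as $\lambda,\lambda^{-1}$, one has $\det(\Id-dg_x)=(1-\lambda)(1-\lambda^{-1})$, which is negative exactly when $\lambda>0,\ \lambda\neq1$ (positive hyperbolic) and positive when $\lambda<0$ (negative hyperbolic) or $|\lambda|=1,\ \lambda\notin\RR$ (elliptic). Thus index $-1$ is equivalent to positive hyperbolic, and this is the quantity I want to produce. Note that non-degeneracy of all iterates forbids root-of-unity eigenvalues, so the three types are disjoint and stable under the passage from $f$ to $f^q$.

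Next, using an interior fixed point $p$ of $f$ (supplied by Brouwer's translation theorem together with area preservation) and a second periodic point, I would work on the open annulus $\mathring{\DD}\setminus\{p\}$. By the rotation-number theory underlying Franks' theorem, the set of rotation numbers realized by periodic points is either a non-degenerate interval — so every rational $p'/q$ in lowest terms with $q$ large is realized — or a single rational realized by infinitely many orbits. For a realized $p'/q$ in lowest terms, let $\wt f$ be a lift of $f$ to the universal cover and set $\hat f := T^{-p'}\circ\wt f^{\,q}$, where $T$ is the deck translation; this descends to the annulus, its fixed points are exactly the periodic points of rotation number $p'/q$, all of genuine period $q$, and $d\hat f$ at such a point equals $df^q$, so each has index $\pm1$ with $-1\Leftrightarrow$ positive hyperbolic. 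Since $\hat f$ acts as the identity on $H_*$ of the annulus, its Lefschetz number is $1-1=0$, so by Lefschetz--Hopf the indices of its fixed points sum to $0$; as each index is $\pm1$ and at least one fixed point exists, at least one has index $-1$, i.e. there is a positive hyperbolic orbit of rotation number $p'/q$. Letting $q\to\infty$ through realized rotation numbers yields positive hyperbolic orbits of arbitrarily large period, hence infinitely many; in the single-rotation-number case the vanishing index sum together with infinitely many fixed points of $\hat f$ already forces infinitely many of index $-1$.

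The main obstacle I anticipate is analytic rather than homological: Lefschetz--Hopf requires the fixed-point set of $\hat f$ to be compactly contained in the open annulus, so I must rule out fixed points escaping to the two ends (the puncture $p$ and $\partial\DD$). Here the finiteness of $\int_{\mathring{\DD}}\omega$ and the non-degenerate smooth extension to $\partial\DD$ should be used to confine recurrence and control the boundary circle map, as in the reduction behind Franks' theorem; the behaviour on $\partial\DD$ — whether its rotation number is rational, giving finitely many boundary periodic points of bounded index — must be accounted for so that the interior index sum is exactly $0$. The second delicate point is the extraction of infinitely many distinct realized rotation numbers (with denominators tending to infinity) from the mere existence of two periodic points, which is the heart of Franks' argument and which I would invoke rather than reprove.
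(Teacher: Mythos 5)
Your core mechanism in the interval case is sound in outline, but the step you yourself flag as an ``obstacle'' is a genuine gap, and it is not a technicality: Lefschetz--Hopf on the \emph{open} annulus requires $\Fix(\hat f)$ to be compact, and you give no argument confining fixed points of $\hat f=T^{-p'}\circ\wt{f}^{\,q}$ away from the two ends. The correct repair is to compactify first -- blow up at the interior fixed point $p$ and keep $\partial\DD$, which is available because $f$ is a diffeomorphism of the closed disk -- and then to observe that $\hat f$ has boundary fixed points exactly when $p'/q$ agrees mod $\ZZ$ with one of the two boundary rotation numbers $\rho_D$, $\rho_{p_*}$; you must choose $p'/q$ avoiding these two values. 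This choice is not cosmetic: nondegenerate boundary fixed points have index $0$ or $-1$, so if they are present your identity ``interior indices sum to $0$'' degrades to ``interior sum $\geq 0$'', i.e.\ $n_+-n_-\geq 0$, which no longer produces a single index $-1$ point. Your second branch is worse than incomplete, it is internally inconsistent: if $\hat f$ had infinitely many fixed points, they would be infinitely many nondegenerate (hence isolated) points, so the fixed set could not be compact and the ``vanishing index sum'' you invoke is simply unavailable. In fact that branch cannot occur at all: all periodic points of rotation number $p'/q$ in lowest terms have the same period $q$, and $\Fix(f^q)\subset\DD$ is finite by nondegeneracy and compactness of $\DD$. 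So the dichotomy you assert (``interval, or a single rational with infinitely many orbits'') is both unproven and wrongly stated; what is true, combining Franks' open-annulus theorem with this finiteness remark, is that infinitely many distinct rotation numbers are realized, after which your per-rotation-number count (with the two excluded values) can be run and does salvage the argument.

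It is worth noting that this repaired route is genuinely different from the paper's. The paper never fixes a rotation number and never faces your compactness issue: it works throughout on compact surfaces (the closed disk and the closed blown-up annulus), reduces the theorem to producing infinitely many \emph{odd}-period points of $f$ or $f^2$ (Lemmas \ref{lemma:odd} and \ref{lemma:odd 2}), and exploits the parity trick that for odd $N$ every interior fixed point of $f^N$ that is not positive hyperbolic has index $+1$ (odd powers keep negative-hyperbolic eigenvalues negative), so the Lefschetz number of the disk bounds the number of such points outright. Odd-period points are then produced by Franks' theorem on the compact annulus via a three-case comparison of $\rho_D$ with $\rho_{p_*}$, passing to $f^2$ and a $2$-periodic point when $\rho_D=\rho_{p_*}=1/2$. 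Your approach, once the compactification and the exclusion of $\rho_D,\rho_{p_*}$ are supplied, trades the paper's parity bookkeeping for a cleaner per-period statement (a positive hyperbolic orbit of each realized period $q$), but as written the Lefschetz step and the second branch do not stand.
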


As will be seen, the diffeomorphisms in Theorem \ref{thm:main} have highly compatibility with the return maps of Birkhoff sections of disk type near $J$-holomorphic curves in 3-dimensional Reeb flows.

\subsection{Applications to 3-dimensional Reeb flows}
 In this subsection, we observe how Theomre \ref{thm:main} is applied to 3-dimensional Reeb flows.

A closed contact three manifold $(Y,\lambda)$ is a pair of a closed contact three manifold $Y$ with a contact 1-form $\lambda$.   A  contact form $\lambda$ on $Y$ defines the Reeb vector field $X_{\lambda}$ and the contact structure $\xi=\mathrm{Ker}\lambda$.  A periodic orbit is a map $\gamma:\mathbb{R}/T_{\gamma}\mathbb{Z}\to Y$ satisfying $\Dot{\gamma}=X_{\lambda}\circ \gamma$ for some $T_{\gamma}>0$ and we write $\gamma^{p}$ for $p\in \mathbb{Z}$ as a periodic orbit of composing $\gamma$ with the natural projection $\mathbb{R}/pT_{\gamma}\mathbb{Z}\to \mathbb{R}/T_{\gamma}\mathbb{Z}$.  A periodic orbit $\gamma$ is simple if $\gamma$ is 
an embedding map and  non-degenerate if the return map $d\phi^{T_{\gamma}}|_{\xi}:\xi_{\gamma(0)}\to\xi_{\gamma(0)}$ has no eigenvalue $1$ where $\phi^{t}$ is the flow of $X_{\lambda}$. We call $(Y,\lambda)$ non-degenerate if all periodic orbits are non-degenerate.

\begin{dfn}
    Let $(Y,\lambda)$ be a contact three-manifold.  A Birkhoff section of disk type for $X_{\lambda}$ on a 3-manifold 
is a compact immersed disk $u:\DD \to Y$ such that
\item[(1).]$u(\DD\backslash \partial \DD)$ is embedded,
\item[(2).] $X_{\lambda}$ is transversal to $u(\DD\backslash \partial \DD)$,
\item[(3).] $u(\partial \DD)$ is tangent to a periodic orbit of $X_{\lambda}$,
\item[(4).] For every $x\in Y\backslash u(\partial \DD)$, there are $-\infty<t_{x}^{-}<0<t_{x}^{+}<+\infty$ such that $\phi^{t^{\pm}_{x}}(x)\in u(\DD)$ where $\phi^{t}$ is the flow of $X_{\lambda}$.
\end{dfn}

A Birkhoff sections of disk type in the context of 3-dimensional Reeb flows was first studied in \cite{HWZ2} and constructed  from $J$-holomorphic planes in dynamically convex contact 3-spheres. The notion of dynamically convex was introduced in \cite{HWZ2} as a generalization of strictly convex contact hypersurface in the 4-dimensional standard symplectic Euclidean space $(\RR^{4},\omega)$ (see Remark \ref{rmk:dyn}). In particular, strictly convex contact hypersurface in $(\RR^{4},\omega)$ is dynamically convex. 
A remarkable benefit of the existence of Berkhoff section is that the restriction of $d\lambda$ on $u(\mathring{\DD})$ define a volume form and the return map of $\phi:(u(\mathring{\DD}),d\lambda)\to (u(\mathring{\DD}),d\lambda)$ is orientation and volume preserving. In particular, as mentioned, it follows immediately from the Franks' theorem and Brouwer’s translation theorem  that there exist either two or infinitely simple periodic orbits.

\begin{rmk}\label{rmk:dyn}
A contact 3-manifold $(Y,\lambda)$ with $c_{1}(\xi)|_{\pi_{2}(Y)}=0$ is dynamically convex if any contractible periodic orbit has greater than or equal to 3 Conley-Zehnder index with respect to a trivialization induced by a binding disk (see \cite{HWZ2} for more details). If $(Y,\lambda)$ is  a dynamically convex contact 3-sphere, the contact structure must be tight. In addition since dynamical convexity is preserved under taking a finite cover,  the contact structure of a dynamically convex contact lens space must be universally tight. See \cite{HWZ1,HWZ2}.
\end{rmk}

\begin{rmk}
    Currently, a lot in 3-dimentional Reeb flows has been clarified by Embedded contact homology which was constructed by M. Hutchings. According to \cite{HT,CHrHL} if a contact 3-manifold $(Y,\lambda)$ has exactly two simple periodic orbit, then $(Y,\lambda)$ is dynamically convex and both of them are non-degenerate elliptic orbit. In addition, $Y$ is a lens space.  If $(Y,\lambda)$ is non-degenerate and not a lens space with exactly two simple orbits, there are infinitely many periodic orbits (see\cite{CHP,CoDR}).  
\end{rmk}

Consider a periodic orbit $\gamma$. If the eigenvalues of the return map $d\phi^{T_{\gamma}}|_{\xi}:\xi_{\gamma(0)}\to\xi_{\gamma(0)}$ are positive (resp. negative) real, $\gamma$ is called positive (resp. negative) hyperbolic. If the eigenvalues of the return map $d\phi^{T_{\gamma}}|_{\xi}:\xi_{\gamma(0)}\to\xi_{\gamma(0)}$ are on the unit circle in $\mathbb{C}$, $\gamma$ is called elliptic. As a refinment of Weinstein conjecture, it is natural to ask which kind of periodic orbit exists. 
For example,  D. Cristofaro-Gardiner, M. Hutchings and D. Pomerleano \cite{CHP} showed that  a non-degenerate $(Y,\lambda)$ with $b_{1}(Y)>0$ has as least one positive hyperbolic simple orbit by using Embedded contact homology and Monopole floer homology, and asked the following question.
\begin{qtn}\cite{CHP}
    Suppose that a non-degenerate  $(Y,\lambda)$ is not a lens space with exactly two simple elliptic orbit. Does $(Y,\lambda)$ has as least one positive hyperbolic simple orbit?
\end{qtn}

 As we will see later, our results support an affirmative answer to the question.

Before proceeding, we need to recall some notions.
Let $(Y,\lambda)$ be a contact 3-manifold. Consider a simple periodic orbit $\gamma:\RR/T_{\gamma} \ZZ\to Y$ and  $\gamma^{*}\xi \to Y$. Then the linearized flow  $d\phi^{t}|_{\xi}$ on the periodic orbit induces a flow on $\gamma^{*}\xi$ and hence on $(\gamma^{*}\xi\backslash 0)/\RR_{+}$. we write $(\gamma^{*}\xi\backslash 0)/\RR_{+}$ as $\mathbb{T}_{\gamma}$ and refer to the vector field induced by $d\phi^{t}$ on $\mathbb{T}_{\gamma}$ as linearized polar dynamics along $\gamma$.
As a set, the blown-up manifold is defined as $Y_{\gamma}:=(Y\backslash \gamma)\bigsqcup \mathbb{T_{\gamma}}$. $Y_{\gamma}$ has a  smooth structure of a manifold such that the Reeb vector field $X_{\lambda}$ extend smoothly to the  linearized polar dynamics on $\mathbb{T_{\gamma}}$ (see \cite[v1 Lemma A.1]{FHr}). It is easy to see that if $(Y,\lambda)$ is non-degenerate, any periodic orbit of $Y_{\gamma}$ is non-degenerate.

Let $u:\mathbb{D}\to Y$ be a Birkhoff section such that $u(\partial \mathbb{D})$ is tangent to $\gamma$. Then we can lift the map to $\tilde{u}:\mathbb{D}\to Y_{\gamma}=(Y\backslash \gamma)\bigsqcup \mathbb{T_{\gamma}}$ smoothly as follows. If $x\in \mathring{\mathbb{D}}$, then $\tilde{u}(x)=u(x)$. If $x\in \partial \mathbb{D}$, then $\tilde{u}(x):=\mathrm{pr}\circ du(\RR_{+} v)$ where $v$ is the outward unit vector at $x$ and $\mathrm{pr}$ is the projection $TY=\RR X_{\lambda} \oplus \xi \to \xi$.

\begin{dfn}\cite[c.f. Definition 1.6]{FHr}
A Birkhoff section $u:\mathbb{D}\to Y$ is $\partial$-strong if for the lift $\tilde{u}:\mathbb{D}\to Y_{\gamma}$, $\tilde{u}(\partial \mathbb{D})$ is transverse to the  linearized polar dynamics on $\mathbb{T_{\gamma}}$ and any trajectory on $\mathbb{T_{\gamma}}$ intersects $\tilde{u}(\partial \mathbb{D})$ infinitely many times in the future and in the past. Here $\gamma$ is the simple periodic orbit 
to which $u(\partial \mathbb{D})$ is tangent.
\end{dfn}

The following is an application of Theorem \ref{thm:main}.
\begin{thm}\label{main:cor}
 If a non-degenerate  contact 3-manifold $(Y,\lambda)$ 
admits a $\partial$-strong Birkhoff section of disk type 
 and has at least 3 simple periodic orbits, then there exists infinitely many simple positive hyperbolic orbits.  
\end{thm}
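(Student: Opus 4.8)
The plan is to deduce Theorem \ref{main:cor} from Theorem \ref{thm:main} by realizing the first-return map of the $\partial$-strong Birkhoff section as a non-degenerate, area-preserving diffeomorphism of the closed disk $\DD$, and then translating the conclusion of Theorem \ref{thm:main} back to the Reeb flow.

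First I would build the interior return map. Let $\gamma$ be the simple periodic orbit to which $u(\partial\DD)$ is tangent, so that $u(\partial\DD)=\gamma$ as a set. By property (2) the Reeb field $X_\lambda$ is transverse to $u(\mathring{\DD})$, and by property (4) every point of $Y\setminus u(\partial\DD)$ returns to $u(\DD)$ in finite forward and backward time; identifying $u(\mathring{\DD})$ with $\mathring{\DD}$, these two facts produce a well-defined first-return diffeomorphism $f$ of $\mathring{\DD}$. Since $X_\lambda$ preserves the closed $2$-form $d\lambda$ (as $\iota_{X_\lambda}d\lambda=0$) and is transverse to the section, the pullback $\omega:=u^{*}(d\lambda)$ is a volume form on $\mathring{\DD}$ preserved by $f$, and $f$ is orientation preserving; finiteness of $\int_{\mathring{\DD}}\omega=\int_{\DD}u^{*}(d\lambda)$ is immediate from compactness of $\DD$.

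The main technical step is to extend $f$ smoothly across $\partial\DD$ to a diffeomorphism of the closed disk, and this is exactly where the $\partial$-strong hypothesis is used. I would pass to the blown-up manifold $Y_\gamma=(Y\setminus\gamma)\sqcup\TT_\gamma$, on which $X_\lambda$ extends smoothly with the linearized polar dynamics on $\TT_\gamma$, and to the lift $\tilde u\colon\DD\to Y_\gamma$. By the definition of $\partial$-strong, $\tilde u(\partial\DD)$ is transverse to the linearized polar dynamics and every trajectory on $\TT_\gamma$ meets $\tilde u(\partial\DD)$ infinitely often in the future and in the past. Combined with properties (2) and (4) in the interior, this shows that $\tilde u$ behaves as a genuine global surface of section for the extended flow on $Y_\gamma$, including along the boundary circle. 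Hence the first-return map extends continuously, and then smoothly, to a diffeomorphism $f\colon\DD\to\DD$ that restricts to the interior map above; the boundary circle is invariant, with boundary dynamics conjugate to the return map of $\tilde u(\partial\DD)$ under the linearized polar dynamics along $\gamma$. Because $(Y,\lambda)$ is non-degenerate, every periodic orbit of the extended flow on $Y_\gamma$ is non-degenerate, so no boundary periodic point of $f$ has eigenvalue $1$ and $f$ is non-degenerate in the sense of Theorem \ref{thm:main}. Establishing the smoothness of this extension, and the transfer of non-degeneracy to the boundary, is the part I expect to be the main obstacle.

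Next I would verify the hypothesis on periodic points and then translate back. The binding $\gamma$ accounts for exactly one of the given simple orbits; any simple orbit other than $\gamma$ is disjoint from $u(\partial\DD)=\gamma$, hence by property (4) and transversality (2) meets $u(\mathring{\DD})$ in an interior periodic point of $f$. Since there are at least three simple orbits, at least two are distinct from $\gamma$, giving at least two periodic points of $f$ in $\mathring{\DD}$; Theorem \ref{thm:main} then yields infinitely many positive hyperbolic periodic points of $f$ in $\mathring{\DD}$. Finally, a period-$n$ point $p$ of $f$ lies on a single Reeb orbit meeting $u(\mathring{\DD})$ in the $n$ distinct points $p,f(p),\dots,f^{n-1}(p)$, hence on a simple periodic orbit $\gamma_p$; as one simple orbit contributes only finitely many $f$-periodic points, infinitely many periodic points of $f$ force infinitely many distinct simple orbits. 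To match types, transversality of the section gives $TY=\RR X_\lambda\oplus\xi$, so $\mathrm{pr}\colon TY\to\xi$ restricts to an isomorphism $T_{u(p)}u(\mathring{\DD})\xrightarrow{\ \sim\ }\xi_{u(p)}$, under which $df^{n}_{p}$ is conjugate to the linearized Reeb return map $d\phi^{T_{\gamma_p}}|_{\xi}$. Equal eigenvalues give equal type, so each positive hyperbolic periodic point of $f$ corresponds to a positive hyperbolic simple orbit, and the infinitely many such points produced by Theorem \ref{thm:main} yield infinitely many simple positive hyperbolic orbits, completing the proof.
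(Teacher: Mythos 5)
Your proposal is correct and follows essentially the same route as the paper: the paper's proof of Theorem \ref{main:cor} likewise realizes the first-return map of the $\partial$-strong Birkhoff section as a non-degenerate, orientation- and $d\lambda$-area-preserving diffeomorphism of $\DD$ with finite total area and applies Theorem \ref{thm:main}. Your write-up merely makes explicit several steps the paper leaves implicit (the smooth extension to the boundary via the blow-up $Y_\gamma$, the count of at least two interior periodic points coming from the three simple orbits, and the eigenvalue matching that transfers ``positive hyperbolic'' back to the Reeb orbits).
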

\begin{proof}[\bf Proof of Theorem \ref{main:cor}]
Since $(Y,\lambda)$ is non-degenerate and the Birkhoff section is $\partial$-strong, the return map on $u(\mathring{\mathbb{D}})$ is non-degenerate, area-preserving and orientation preserving  map with respect to $d\lambda$. In addition, it extends smoothly to the boundary of the disk with non-degeneracy and $\int_{u(\mathring{\mathbb{D}})}d\lambda<+\infty$ because of the Stokes' theorem. This implies that we can apply Theorem \ref{thm:main} to this map.
\end{proof}

Now, we recall the standard contact structure on a lens space $L(p,q)$.
Let  $p\geq q>0$  be mutually prime. The standard contact structure $\xi_{\mathrm{std}}$ on $L(p,q)$ is defined as follows.
Consider a contact 3-sphere $(\partial B(1),\lambda_{0}|_{\partial B(1)})$ where $\partial B(1)=\{(z_{1},z_{2})\in \mathbb{C}^{2}||z_{1}|^{2}+|z_{2}|^{2}=1\}$, $\lambda_{0}=\frac{i}{2}\sum_{i=1,2}(z_{i}d\Bar{z_{i}}-\Bar{z_{i}}dz_{i})$. The action $(z_{1},z_{2})\mapsto (e^{\frac{2\pi i}{p}}z_{1},e^{\frac{2\pi iq}{p}}z_{2})$ preserves $(\partial B(1),\lambda_{0}|_{\partial B(1)})$ and the tight contact structure. Hence we have the quotient space which is a contact manifold and write  $(L(p,q),\lambda_{p,q})$, $\xi_{\mathrm{std}}=\mathrm{Ker}\lambda_{p,q}$.
If a contact manifold is contactomorphic to a universally tight lens space, we can simplify the assumption  as follows.

\begin{thm}\label{thm:tight}
 Let $\lambda$ be  a non-degenerate  contact form on $(L(p,q),\xi_{\mathrm{std}})$.
If $(L(p,q),\lambda)$ admits a  Birkhoff section of disk type and  has at least 3 simple periodic orbits, then there are infinitely many simple positive hyperbolic  orbits.
\end{thm}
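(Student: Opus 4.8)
The plan is to deduce Theorem \ref{thm:tight} from Theorem \ref{main:cor} by upgrading the given Birkhoff section of disk type to a $\partial$-strong one. First I would record the dictionary between the Reeb dynamics and the return map. Suppose $u:\DD\to Y$ is a $\partial$-strong Birkhoff section of disk type with binding orbit $\gamma$ and return map $f$ on $u(\mathring{\DD})$. Then every simple periodic orbit other than $\gamma$ meets $u(\mathring{\DD})$ in exactly one $f$-orbit, namely its finite set of intersection points, and distinct simple orbits produce distinct $f$-orbits; since iterating an orbit creates no new intersection points, this is a bijection between simple orbits $\neq\gamma$ and periodic $f$-orbits. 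Moreover, along such an orbit, of period $T'$ meeting $u(\mathring{\DD})$ in $\ell$ points, the identification $Tu(\mathring{\DD})\cong\xi$ intertwines $df^{\ell}$ with the linearized Reeb return map $d\phi^{T'}|_{\xi}$, so the eigenvalues and their signs agree; in particular an $f$-periodic point is positive hyperbolic exactly when the corresponding simple orbit is. Hence the hypothesis of at least $3$ simple orbits yields at least two periodic points of $f$ in $\mathring{\DD}$, Theorem \ref{thm:main} produces infinitely many positive hyperbolic $f$-periodic points, and the dictionary converts these into infinitely many simple positive hyperbolic orbits. This is precisely the proof of Theorem \ref{main:cor} (the $\partial$-strong condition being what guarantees that $f$ extends smoothly and non-degenerately across $\partial\DD$ with $\int_{u(\mathring{\DD})}d\lambda<+\infty$), so everything reduces to the following claim.

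\textbf{Key Lemma.} A non-degenerate universally tight lens space $(L(p,q),\xi_{\mathrm{std}})$ that admits a Birkhoff section of disk type admits a $\partial$-strong one.

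To prove the Key Lemma I would analyze the linearized polar dynamics on $\TT_{\gamma}$ through the transverse rotation number of the binding $\gamma$. Recall that $\TT_{\gamma}$ is a $2$-torus, that the first return map of the polar flow on a fiber circle is the projectivization of $d\phi^{T_{\gamma}}|_{\xi}$, and that $\tilde{u}(\partial\DD)$ is the closed curve on $\TT_{\gamma}$ recording the direction from which the pages approach $\gamma$. The $\partial$-strong condition says exactly that $\tilde{u}(\partial\DD)$ is a global cross section of this flow: transverse to it, and met infinitely often by every trajectory in both time directions. For an elliptic binding the polar flow is, up to smooth equivalence, a linear flow on $T^{2}$, and $\tilde{u}(\partial\DD)$ is a cross section as soon as the transverse rotation number $\rho(\gamma)$, measured against the page framing, is not an integer; the Birkhoff property forces the pages to wind strictly relative to the flow, which should pin $\rho(\gamma)$ into the required non-integer range. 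Here universal tightness enters: passing to the tight universal cover and applying the Bennequin-type inequality to the disk bounded by a lift of $\gamma$ bounds the self-linking number, and hence constrains $\rho(\gamma)$, forcing $\gamma$ to be elliptic with $\rho(\gamma)\in(k,k+1)$ for the integer $k$ fixed by the page framing. For such a $\gamma$, transversality can be arranged after a $C^{\infty}$-small perturbation of $u$ supported near $\partial\DD$, which alters neither the interior return dynamics nor the orbit count, and the cross-section property then holds automatically.

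The main obstacle is the possibility of a hyperbolic binding orbit. If $\gamma$ were positive or negative hyperbolic, the polar dynamics on $\TT_{\gamma}$ would have closed orbits along the (un)stable eigendirections, and a transverse curve $\tilde{u}(\partial\DD)$ could fail to meet them, destroying the cross-section property; equivalently, an integer transverse rotation number is incompatible with the pages sweeping monotonically around $\gamma$ down to the binding. The heart of the argument is therefore to exclude this case in the universally tight setting, or else to replace the given section by one with elliptic binding. I expect to achieve this through the universal cover $(S^{3},\xi_{\mathrm{std}})$, where the disk section lifts to a rational Birkhoff section whose binding covers a transverse unknot of self-linking $-1$; the sharp Bennequin inequality then forces the binding to be elliptic with the correct rotation number, and equivariance lets the resulting $\partial$-strong section descend to $L(p,q)$. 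Controlling the section exactly at the binding, rather than only in the interior where transversality is automatic, is the delicate point, and is precisely what the $\partial$-strong hypothesis was designed to encode.
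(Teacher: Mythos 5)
Your reduction coincides with the paper's: the dictionary between the return map and Reeb orbits plus Theorem \ref{main:cor} is exactly how the paper argues, and your Key Lemma is precisely the paper's Proposition \ref{main;prop}. The gap is in your proof of the Key Lemma, in two places. First, the Bennequin-type argument cannot do what you ask of it: the rational self-linking number of the binding is forced to equal $-\frac{1}{p}$ in any case (this is part of \cite[Theorem 1.12]{HrLS}), and the self-linking number carries no information about the eigenvalues of the linearized return map, so it cannot ``force $\gamma$ to be elliptic.'' What the dynamical characterization of universally tight lens spaces in \cite{HrLS} actually yields is that $\gamma$ is $p$-unknotted with $\mathrm{sl}(\gamma)=-\frac{1}{p}$ and that the Conley--Zehnder index of $\gamma^{p}$ in the disk framing is at least $3$ --- a condition compatible with, say, a negative hyperbolic binding of odd index $\geq 3$, which is exactly the case you flag as ``the main obstacle'' and for which you offer only ``I expect to achieve this,'' an acknowledgment rather than a proof. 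Note also that excluding hyperbolic bindings is not actually required: $\partial$-strongness asks that $\tilde{u}(\partial\DD)$ be a closed transversal in $\TT_{\gamma}$ met infinitely often by every polar orbit, a transversality-plus-homology condition that can hold even when the polar flow has closed orbits along eigendirections.

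Second, your closing step --- ``transversality can be arranged after a $C^{\infty}$-small perturbation of $u$ supported near $\partial\DD$'' --- is precisely the hard technical content of Proposition \ref{main;prop}, asserted without proof. The perturbed map must remain an immersed disk with embedded interior transverse to $X_{\lambda}$, must retain the global-section property, and must have controlled $1$-jet along $\partial\DD$ relative to the linearized flow; none of this is routine, which is why the paper does not perturb the given section at all. Instead, following the proof of \cite[Theorem 1.12, iii)$\to$i)]{HrLS}, it replaces the section wholesale by one with the same binding coming from a $J$-holomorphic plane: the projected closure of the plane is a $C^{1}$ Birkhoff section of disk type by \cite[v1, Lemma C.3]{FHr}, and \cite[v1, Lemma C.6]{FHr} then produces a $C^{\infty}$ $\partial$-strong Birkhoff section $C^{1}$-close to it. Without an argument of comparable strength (or an appeal to these results), your Key Lemma --- and with it the theorem --- remains unproven.
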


\begin{rmk}\label{rmk:tight}
 In \cite{HrS1,HrLS}, necessary and sufficient conditions for $(L(p,q),\lambda)$ with $\mathrm{Ker}\lambda=\xi_{\mathrm{std}}$ having a Birkhoff section of disk type are given.
\end{rmk}

 The next proposition allows us to apply  Theorem \ref{main:cor} to Theorem \ref{thm:tight} and hence Theorem \ref{thm:tight} follows immediately.

\begin{prop}\label{main;prop}
 Let $\lambda$ be  a non-degenerate  contact form on $(L(p,q),\xi_{\mathrm{std}})$.
If $(L(p,q),\lambda)$ admits a  Birkhoff section of disk type, then there is a $\partial$-strong Birkhoff section of disk type with the same binding.
\end{prop}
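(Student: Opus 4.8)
The plan is to reformulate the $\partial$-strong condition as a global-surface-of-section statement on the blow-up and then to repair the boundary by a modification supported in a collar of $\partial\DD$. Concretely, given a disk-type Birkhoff section $u\colon\DD\to L(p,q)$ with binding $\gamma$, its lift $\tilde u\colon\DD\to Y_\gamma$ is an immersed disk whose interior is embedded, transverse to the extended flow, and (by the original Birkhoff property) met infinitely often by every orbit in $Y\setminus\gamma$. Being $\partial$-strong is exactly the assertion that $\tilde u(\partial\DD)\subset\mathbb{T_{\gamma}}$ is transverse to the linearized polar dynamics and is met infinitely often, in forward and backward time, by every trajectory of that flow; equivalently, $\tilde u(\DD)$ is a global surface of section for the blown-up Reeb flow on all of $Y_\gamma$. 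Thus the only thing to arrange is the behavior of the boundary curve relative to the polar flow on $\mathbb{T_{\gamma}}$, and I would work entirely in a neighborhood of $\gamma$.

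First I would classify $\gamma$ using non-degeneracy. Since $(L(p,q),\lambda)$ is non-degenerate, every iterate of $d\phi^{T_\gamma}|_\xi$ has no eigenvalue $1$, so $\gamma$ is either elliptic with irrational rotation number, positive hyperbolic, or negative hyperbolic. In each case the linearized polar dynamics on $\mathbb{T_{\gamma}}$ is the suspension over the base circle $\RR/T_\gamma\ZZ$ of the induced circle map on rays: in the elliptic case it is conjugate to an irrational linear flow and is minimal; in the positive hyperbolic case it is Morse--Smale with four closed orbits (two attracting, two repelling), each winding once around the base; in the negative hyperbolic case a connected double cover of the base reduces it to the positive hyperbolic picture. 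The key elementary observation I would record is a recurrence criterion: a closed curve on $\mathbb{T_{\gamma}}$ transverse to the polar flow is met infinitely often, in both time directions, by every trajectory if and only if it has nonzero winding in the fiber (ray) direction. Indeed, nonzero fiber winding forces the curve to cross every closed orbit of the flow, and then the attracting/repelling/minimal structure guarantees that every spiraling orbit crosses the curve once per revolution as it limits onto these closed orbits.

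With this criterion in hand, the construction amounts to modifying $u$ inside a collar $[0,1)\times\partial\DD$ so that the new $\tilde u(\partial\DD)$ is transverse to the polar flow and has nonzero fiber winding. I would do this by reframing the germ of the pages along $\gamma$, i.e. by composing $\tilde u$ near the boundary with a spiraling isotopy of $\mathbb{T_{\gamma}}$ that carries the approach directions of the disk transversally across the flow; since one may twist as much as necessary, the fiber winding can be made nonzero and tangencies removed. Here I would use that $(L(p,q),\xi_{\mathrm{std}})$ is universally tight and that its disk-type Birkhoff sections are classified (cf. Remark \ref{rmk:tight}), so that the reframed page still closes up to an embedded disk realizing the same binding $\gamma$. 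One then checks that the three defining properties survive: the binding is unchanged by construction; the interior stays embedded and transverse to $X_\lambda$ for a sufficiently thin collar and gentle twist; and condition (4) is preserved because the modification is localized near $\gamma$ and maintains transversality to the flow, so every orbit returning to the unmodified part of $u(\DD)$ still returns, while the orbits passing close to $\gamma$ are precisely the ones now captured by the repaired boundary. Applying the recurrence criterion to the new boundary then yields the $\partial$-strong property.

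The main obstacle, and the step requiring the most care, is this collar modification: one must simultaneously achieve transversality and nonzero fiber winding while preserving that the result is genuinely a disk-type Birkhoff section with binding $\gamma$ --- in particular keeping the interior embedded and, more delicately, keeping the global return property (4) intact. The hyperbolic cases are the most delicate, since there the polar flow is not minimal and one cannot rely on density; instead one must verify explicitly that the repaired boundary meets all of the closed orbits of the linearized polar dynamics, which is exactly what the nonzero-fiber-winding normalization is designed to guarantee.
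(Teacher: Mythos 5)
Your reduction of the $\partial$-strong condition to a statement about the lifted boundary curve in the torus $\mathbb{T}_\gamma$ is sound, and your recurrence criterion is essentially correct in the direction you need (for a closed curve transverse to the polar flow, all crossings with a closed orbit of the flow have the same sign, so nonzero algebraic fiber winding forces geometric intersection with every closed orbit, and the Morse--Smale/minimal structure then gives infinitely many crossings for every trajectory). The fatal step is the collar modification, specifically the claim that ``one may twist as much as necessary'' to make the fiber winding nonzero. The class $[\tilde{u}(\partial \DD)] \in H_1(\mathbb{T}_\gamma) \cong \ZZ^2$ is \emph{not} adjustable by a modification supported in a collar: if $u'$ agrees with $u$ outside the collar, the old and new lifted boundary curves cobound an annulus in $Y_\gamma$, so their difference lies in the kernel of $H_1(\mathbb{T}_\gamma) \to H_1(Y_\gamma)$, and for a knot complement in a lens space that kernel is spanned by the rational-longitude class --- which is the class of $\tilde{u}(\partial\DD)$ itself. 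Since you must keep base degree $p$ (the boundary remains a $p$-fold cover of $\gamma$), the class, and in particular the fiber winding, is frozen. Consequently the real question is not whether one can introduce winding but whether the \emph{fixed} class admits a representative transverse to the linearized polar dynamics with the correct crossing behavior; this is equivalent to a rotation-number condition on $\gamma^p$ relative to the page framing (in the nondegenerate setting, essentially $\mu_{CZ}(\gamma^p)\ge 3$), and nothing in your argument establishes it. This is also where your appeal to universal tightness is doing no actual work: the paper uses the hypothesis $\mathrm{Ker}\,\lambda = \xi_{\mathrm{std}}$ precisely through \cite[Theorem 1.12]{HrLS}, which converts the existence of a disk-type Birkhoff section into $p$-unknottedness, $\mathrm{sl}(\gamma) = -\frac{1}{p}$, and the Conley--Zehnder index bound for $\gamma^p$ --- the very rotation condition your construction silently needs.

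A secondary, related gap: even granting a favorable rotation number, your twisting must preserve transversality of the page interior to the \emph{actual} Reeb flow in the collar (not merely transversality of the boundary curve to the linearized flow on $\mathbb{T}_\gamma$), and near the binding the admissible amount of spiraling of the page is constrained by the rotation of nearby Reeb trajectories around $\gamma$; ``gentle twist'' and ``sufficiently thin collar'' do not reconcile with ``twist as much as necessary.'' The paper avoids this local surgery entirely: after invoking \cite[Theorem 1.12]{HrLS} it replaces the given section by one swept out by a $J$-holomorphic plane asymptotic to $\gamma^p$ (following the proof of $\mathrm{iii}) \to \mathrm{i})$ there), and then cites \cite[v1, Lemmas C.3 and C.6]{FHr}, where the asymptotic analysis of the holomorphic curve guarantees that the lifted boundary converges to an eigensolution of the asymptotic operator, yielding a $C^\infty$ Birkhoff section, $C^1$-close to the holomorphic one, that is $\partial$-strong with the same binding. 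If you want to salvage a direct collar argument, you would need to first prove the index/rotation condition (which in this setting requires the input from \cite{HrLS}) and then carry out the transverse isotopy of the boundary curve within its fixed homology class together with a careful interior-transversality estimate --- at which point you have reconstructed most of the machinery the paper imports.
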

Proposition \ref{main;prop} is proved in the last section of this paper.

The existence of a Birkhoff section of disk type has been studied under dynamical convexity. To explain it, we introduce some notions which are also used in Section 3.

\begin{dfn}
    A knot $K\subset Y$ is called $p$-unknotted if there exists an immersion $u:\mathbb{D}\to Y$ such that $u|_{\mathrm{int}(\mathbb{D})}$ is embedded and $u|_{\partial \mathbb{D}}:\partial \mathbb{D}\to K$ is a $p$-covering map.
    \end{dfn}
 \begin{dfn}\cite[cf. Subsection 1.1]{BE}
    Assume that a knot $K\subset Y$ is $p$-unknotted, transversal to $\xi$ and oriented by the co-orientation of $\xi$. Let $u:\mathbb{D}\to Y$ be  an immersion  such that $u|_{\mathrm{int}(\mathbb{D})}$ is embedded and $u|_{\partial \mathbb{D}}:\partial \mathbb{D}\to K$ is a $p$-covering map. Take a non-vanishing section $Z:\mathbb{D}\to u^{*}\xi$ and consider the immersion $\gamma_{\epsilon}:t\in \mathbb{R}/\mathbb{Z} \to \mathrm{exp}_{u(e^{2\pi i t})}(\epsilon Z(u(e^{2\pi i t})))\in Y\backslash K$ for small $\epsilon>0$. 
    
    Define the rational self-linking number $\mathrm{sl}(K,u)\in \mathbb{Q}$ as
    \begin{equation*}
        \mathrm{sl}(K,u)=\frac{1}{p^{2}} (\mathrm{algebraic\,\,intersection\,\,number\,\,of}\,\, \gamma_{\epsilon}\,\,\mathrm{with}\,\,u)
    \end{equation*}
    If  $c_{1}(\xi)|_{\pi_{2}(Y)}=0$, $\mathrm{sl}(K,u)$ is independent of $u$. Hence  we write $\mathrm{sl}(K)$.
\end{dfn}
\begin{rmk}
In generall, (rational) self-linking number is defined for rationally null-homologous knot by using a (rational) Seifert surface. See \cite{BE}.
\end{rmk}

We assume that lens spaces $L(p,q)$ contain $S^{3}$ as a lens space with $p=1$.

\begin{thm}\label{fundament}\cite[Theorem 1.7, Corollary 1.8]{HrS2}
If $\lambda$ is any dynamically convex contact form on $L(p,q)$, then for every
$p$-unknotted simple orbit $\gamma$ with $\mathrm{sl}(\gamma)=-\frac{1}{p}$, $\gamma^{p}$ must bound  
a disk which is a Birkhoff section. Moreover, this
Birkhoff section is a page of a rational open book decomposition of $L(p,q)$  such that all pages are Birkhoff sections. 
\end{thm}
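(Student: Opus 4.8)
The plan is to follow the pseudoholomorphic curve strategy pioneered by Hofer, Wysocki and Zehnder for $S^{3}$ and carried over to the rational setting by Hryniewicz and Salom\~{a}o. Working in the symplectization $(\RR\times L(p,q),\,d(e^{s}\lambda))$ with a $\lambda$-compatible almost complex structure $J$, I would study the moduli space of \emph{fast finite-energy planes}: $J$-holomorphic maps $\tilde u=(a,u):\CC\to\RR\times L(p,q)$ of finite Hofer energy whose single positive puncture is asymptotic to the multiply covered binding orbit $\gamma^{p}$. The two topological hypotheses are precisely tailored to this setup. The $p$-unknottedness guarantees that the projected plane caps $\gamma^{p}$ off by a disk whose interior is embedded, matching the disk-type page; and the condition $\mathrm{sl}(\gamma)=-\tfrac1p$ pins down the \emph{fractional} asymptotic winding of the plane around $\gamma$, which together with dynamical convexity forces the Fredholm index of the plane to equal $2$, the value for which---after quotienting by domain reparametrizations and the target $\RR$-action---the moduli space is a one-parameter family sweeping out pages.

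The second step is existence of one such fast plane. The standard route is to seed a Bishop-type family of small pseudoholomorphic disks near $\gamma$ and take a limit via SFT compactness; alternatively one lifts the whole picture to the $p$-fold cover $S^{3}\to L(p,q)$ branched along $\gamma$ and invokes the $S^{3}$ result directly, then pushes back down. The decisive input here is dynamical convexity: every contractible orbit has Conley--Zehnder index at least $3$ in the binding-disk trivialization, and this lower bound is exactly what prevents the degeneration from producing a broken building carrying low-index orbits. Consequently the bubbling-off analysis yields a single somewhere-injective plane with the expected asymptotics rather than a nodal limit, the large asymptotic winding numbers forced by $\mu_{\mathrm{CZ}}\ge 3$ being what keeps the limit under control.

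With one fast plane in hand, I would prove embeddedness and transversality. Applying the asymptotic analysis of Hofer--Wysocki--Zehnder and Siefring together with the writhe/self-linking inequality, dynamical convexity forces the projection $u:\CC\to L(p,q)\setminus\gamma$ to be an embedding transverse to the Reeb field $X_{\lambda}$ with no interior critical points. Automatic transversality for index-$2$, genus-$0$ curves with these favorable asymptotics then makes the moduli space a smooth surface, and a further SFT compactness argument---once again using dynamical convexity to exclude bad breaking---shows that these pages foliate $L(p,q)\setminus\gamma$ and that the only boundary degeneration is convergence onto $\gamma$. This is exactly a \emph{rational open book decomposition} with binding $\gamma$ whose pages are the embedded disks $u(\CC)$, each of which caps $\gamma^{p}$.

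Finally, each page is a Birkhoff section: transversality of $X_{\lambda}$ to the page interiors verifies conditions (1)--(2), tangency at the binding $\gamma$ gives (3), and the fact that the planes sweep out the entire complement $L(p,q)\setminus\gamma$ forces every Reeb trajectory to cross the foliation repeatedly in both forward and backward time, which is (4). I expect the main obstacle to be the global step---proving that the moduli space of fast planes is compact up to the binding and genuinely foliates $L(p,q)\setminus\gamma$ without nodal or multiply covered degenerations. In the lens-space setting this requires careful bookkeeping of the fractional asymptotic windings and of the branched-cover correspondence, and it is here that dynamical convexity is indispensable: it is the single hypothesis that simultaneously fixes the correct index, forces embeddedness, and rules out the obstructing breakings.
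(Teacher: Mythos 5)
This theorem is not proved in the paper at all---it is imported verbatim from Hryniewicz--Salom\~{a}o \cite{HrS2}, and your outline faithfully reconstructs the strategy of that cited proof: fast finite-energy planes asymptotic to $\gamma^{p}$, existence via a Bishop family and SFT compactness with dynamical convexity excluding breaking, embeddedness from asymptotic winding and self-linking estimates, automatic transversality, and the resulting foliation giving the rational open book whose pages are Birkhoff sections. One small caution: your ``alternative'' route via the cover $S^{3}\to L(p,q)$ is an honest (unbranched) $p$-fold cover, not a cover branched along $\gamma$, and verifying that the lifted orbit satisfies the unknottedness and self-linking hypotheses of the $S^{3}$ theorem is itself nontrivial, which is why \cite{HrS2} works directly downstairs.
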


As an immediate corollary of Theorem \ref{thm:tight} and Theorem \ref{fundament}, we have

\begin{cor}
    Let  $(L(p,q),\lambda)$ be a non-degenerate dynamically convex. If there is a $p$-unknotted simple orbit $\gamma$ with $\mathrm{sl}(\gamma)=-\frac{1}{p}$, then there are   either infinitely many simple  positive hyperbolic  orbits or  exactly two simple elliptic orbits.
\end{cor}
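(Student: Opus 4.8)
The plan is to combine Theorem~\ref{fundament}, which manufactures a Birkhoff section of disk type out of the dynamical convexity hypothesis, with Theorem~\ref{thm:tight} and the known classification of contact manifolds carrying exactly two simple orbits, so that the two alternatives in the statement correspond precisely to the dichotomy ``at least three simple orbits'' versus ``exactly two simple orbits.''

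First I would produce the Birkhoff section. By hypothesis $\gamma$ is a $p$-unknotted simple orbit with $\mathrm{sl}(\gamma)=-\frac{1}{p}$ and $\lambda$ is dynamically convex, so Theorem~\ref{fundament} applies verbatim and yields that $\gamma^{p}$ bounds a disk which is a Birkhoff section of disk type for $X_{\lambda}$. To feed this into Theorem~\ref{thm:tight} I must also know that $(L(p,q),\lambda)$ has $\mathrm{Ker}\,\lambda=\xi_{\mathrm{std}}$; this I would extract from Remark~\ref{rmk:dyn}, which records that the contact structure underlying any dynamically convex lens space is universally tight and hence identified with $\xi_{\mathrm{std}}$. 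With a Birkhoff section of disk type in hand, the return map on $u(\mathring{\DD})$ is a finite-area, area- and orientation-preserving diffeomorphism, so the Franks--Brouwer dichotomy recalled after the definition of a Birkhoff section gives that the Reeb flow has either exactly two simple periodic orbits or infinitely many.

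From here the proof splits into two cases. If there are at least three simple periodic orbits I would invoke Theorem~\ref{thm:tight} directly, obtaining infinitely many simple positive hyperbolic orbits, which is the first alternative. In the remaining case there are exactly two simple periodic orbits, and here I would appeal to the embedded contact homology classification quoted earlier (see \cite{HT,CHrHL}): a contact three-manifold with exactly two simple periodic orbits has both of them non-degenerate elliptic. This places us in the second alternative, exactly two simple elliptic orbits, and closes the argument.

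The bulk of the work is already packaged inside Theorems~\ref{fundament} and~\ref{thm:tight}, so the proof is genuinely a short synthesis; the only delicate point is the passage from dynamical convexity to the hypothesis ``$(L(p,q),\xi_{\mathrm{std}})$'' required by Theorem~\ref{thm:tight}. I expect this to be the main obstacle: one must verify that the universally tight structure furnished by Remark~\ref{rmk:dyn} coincides, up to a contactomorphism respecting the relevant binding and self-linking data, with the standard $\xi_{\mathrm{std}}$ appearing in Theorem~\ref{thm:tight}. Once this matching is confirmed, the two-versus-infinity dichotomy produces the two stated alternatives immediately.
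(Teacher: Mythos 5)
Your proposal is correct and is precisely the synthesis the paper intends: the paper states this corollary as an immediate consequence of Theorem~\ref{fundament} (producing the disk-type Birkhoff section from the $p$-unknotted orbit with $\mathrm{sl}(\gamma)=-\frac{1}{p}$) and Theorem~\ref{thm:tight}, with the two-orbit alternative handled by the ECH classification of \cite{HT,CHrHL} recorded in the remarks, exactly as you do. Your flagging of the passage from dynamical convexity to $\xi_{\mathrm{std}}$ via universal tightness (Remark~\ref{rmk:dyn} together with the classification of universally tight structures on lens spaces) is the one point the paper glosses over, and your treatment of it is consistent with the paper's implicit reasoning.
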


Whether a dynamically convex lens space $(L(p,q),\lambda)$ has $p$-unknotted simple orbit $\gamma$ with $\mathrm{sl}(\gamma)=-\frac{1}{p}$ has been studied and is partially known. In particular it depends on the contact structure. First of all, it was proved by Hofer Wysocki Zehnder \cite{HWZ2} that any dynamically convex $(S^{3},\lambda)$ must have a  $1$-unknotted simple orbit $\gamma$ with $\mathrm{sl}(\gamma)=-1$ , and recentry Hryniewicz and Salomão \cite{HrS2} showed the same result for $L(2,1)$ by developing the original technique and after that  Schneider \cite{Sch} generalized it to $(L(p,1).\xi_{\mathrm{std}})$. On the other hand, the second  author \cite{Shi} showed in that non-degenerate dynamically convex $(L(p,p-1),\lambda)$ with $\lambda$ must have a  $p$-unknotted simple orbit $\gamma$ with $\mathrm{sl}(\gamma)=-\frac{1}{p}$ by using Embedded contact homology. 
In summary,
\begin{cor}
     Assume $(Y,\lambda)$ be a dynamically convex non-degenerate contact 3-manifold such that $Y$ is diffeomorphic to $L(p,p-1)$ for some $p$. Then then there are  either infinitely many simple  positive hyperbolic  orbits or  exactly two simple elliptic orbits.
\end{cor}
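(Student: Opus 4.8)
The plan is to read this corollary as the case $q=p-1$ of the Corollary stated immediately above it, with the one missing hypothesis — the existence of a distinguished binding orbit — supplied by the second author's earlier work. Indeed, that preceding Corollary already yields the full dichotomy (infinitely many simple positive hyperbolic orbits, or exactly two simple elliptic orbits) for \emph{any} non-degenerate dynamically convex $(L(p,q),\lambda)$ that carries a $p$-unknotted simple orbit $\gamma$ with $\mathrm{sl}(\gamma)=-\frac{1}{p}$. So the entire task reduces to producing such a $\gamma$ from the hypotheses that $(Y,\lambda)$ is non-degenerate and dynamically convex with $Y$ diffeomorphic to $L(p,p-1)$.

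First I would convert the purely topological assumption ``$Y\cong L(p,p-1)$'' into a contact-geometric one. By Remark \ref{rmk:dyn}, dynamical convexity forces $\xi=\ker\lambda$ to be universally tight, and since universally tight contact structures on a lens space are unique up to contactomorphism (and orientation), $(Y,\xi)$ is contactomorphic to $(L(p,p-1),\xi_{\mathrm{std}})$. Transporting $\lambda$ through this contactomorphism, I may assume outright that $\lambda$ is a non-degenerate dynamically convex contact form on $(L(p,p-1),\xi_{\mathrm{std}})$. I would then invoke the existence result of the second author \cite{Shi}, which guarantees in this setting a $p$-unknotted simple orbit $\gamma$ with $\mathrm{sl}(\gamma)=-\frac{1}{p}$ (in the boundary case $p=1$, where $L(1,0)=S^{3}$ and $\mathrm{sl}(\gamma)=-1$, this is the classical theorem of Hofer, Wysocki and Zehnder \cite{HWZ2}). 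With $\gamma$ in hand the hypotheses of the preceding Corollary hold verbatim, and the claimed dichotomy follows at once.

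The genuine content of the statement does not reside in this corollary but upstream: in Theorem \ref{thm:main} (and hence in Theorems \ref{main:cor} and \ref{thm:tight}), which drives the ``infinitely many positive hyperbolic'' alternative, and in the embedded contact homology computation of \cite{Shi} that manufactures the binding orbit. At this final level the only real checkpoint is the reduction step: I must make sure that dynamical convexity truly pins the contact structure down to $\xi_{\mathrm{std}}$, so that \cite{Shi} — which is phrased for the standard tight structure — is applicable, and that non-degeneracy is preserved when passing through the identifying contactomorphism. I expect this bookkeeping, rather than any substantive difficulty, to be the main (and essentially only) obstacle at this stage.
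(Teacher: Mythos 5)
Your proposal is correct and coincides with the paper's own (implicit) argument: the paper obtains this corollary "in summary" by feeding the second author's ECH result \cite{Shi} (and \cite{HWZ2} for the case $p=1$), which produces the $p$-unknotted simple orbit $\gamma$ with $\mathrm{sl}(\gamma)=-\frac{1}{p}$, into the immediately preceding corollary. Your extra bookkeeping step — using Remark \ref{rmk:dyn} and the uniqueness of universally tight structures to identify $\ker\lambda$ with $\xi_{\mathrm{std}}$ so that Theorem \ref{thm:tight} and \cite{Shi} apply — is exactly the reduction the paper leaves implicit, so this is the same proof, just spelled out.
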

\begin{rmk}
    It follows from \cite{HrS2} that any dynamically convex $(L(p,q),\lambda)$ with even $p$ has an elliptic orbit. Combining with \cite{Shi1,Shi2}, we have that any non-degenerate dynamically convex $(L(p,q),\lambda)$ with at least 3 simple periodic orbits has a simple positive hyperbolic orbit.
\end{rmk}
We end this section with the following question.
\begin{qtn}
    Let $(Y,\lambda)$ be a non-degenerate contact 3-manifold. Assume $(Y,\lambda)$ is not a lens space with exactly two simple elliptic orbit. Does $(Y,\lambda)$ have infinitely many simple positive hyperbolic orbits?
\end{qtn}
\subsection*{Acknowledgement}
TS would like to thank his advisor Professor Kaoru Ono for his support. MA was supported by JSPS KAKENHI Grants 22K03302. TS was supported by JSPS KAKENHI Grants  JP21J20300.

\section{Proof of Theorem \ref{thm:main}}
In this section, we prove Theorem \ref{thm:main}. For the purpose, we start in general situations.

Let $\Sigma$ be a surface.
We denote the set of fixed point of $f$ by $\Fix(f)$, the set $\bigcup_{n:\odd}\Fix(f^n)$  of periodic points with odd period by  $\Per^\odd(f)$ and the set of positive hyperbolic periodic points by $\Per_{h+}(f)$.
For any isolated fixed point $p$ of $f$, let $\ind(p,f)$ be the fixed point index of $f$. Notice that the fixed point index of a fixed point  in the boundary is defined by the fixed point index for then extension $\tilde{f}$ of $f$ to an open manifold $\tilde{\Sigma}$ such that $\tilde{f}(\tilde{\Sigma})=\Sigma$.
When the diffeomorphism is non-degenerate, any periodic points at the boundary are `positive half-saddles', whose fixed point index is $0$ (contracting along the boundary circle) or $-1$ (expanding along the boundary circle).

Fix an area preserving diffeomorphism $f$ which is non-degenerate and is isotopic to the identity map.
The following lemmas reduce Theorem \ref{thm:main} to finding infinitely many periodic points with odd period of $f$ or $f^2$.
\begin{lemma}
\label{lemma:odd}
Let $f$ be a non-degenerate diffeomorphism of compact surface $\Sigma$
 which is area-preserving on the interior and isotopic to the identity map.
In addition, we assume that $f$ is area preserving on the interior with respect to a volume form defined on the interior.
If $\Per^\odd(f)$ is infinite,
 then $\Per^\odd(f) \cap \Per_{h+}(f)$ is infinite.
\end{lemma}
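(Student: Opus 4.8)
The plan is to control the periodic points of the odd iterates $f^{n}$ by the Lefschetz fixed point theorem, using that for a non-degenerate area-preserving map the local fixed point index singles out exactly the positive hyperbolic points. Because $f$ is isotopic to the identity, so is every $f^{n}$, and hence $L(f^{n})=\sum_{i}(-1)^{i}\Tr\bigl(\operatorname{id}|_{H_{i}(\Sigma)}\bigr)=\chi(\Sigma)$ is independent of $n$. Non-degeneracy makes every point of $\Fix(f^{n})$ a non-degenerate, hence isolated, fixed point, so the Lefschetz theorem gives $\sum_{p\in\Fix(f^{n})}\ind(p,f^{n})=\chi(\Sigma)$ for all $n$, where boundary fixed points are given the index of the inward extension $\tilde f$ recalled above.

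First I would record the index of each interior periodic point of exact period $d\mid n$ as a fixed point of $f^{n}$. Since $f$ is area-preserving the two eigenvalues of $df^{d}_{p}$ are $\mu^{\pm1}$, so $\ind(p,f^{n})=\operatorname{sign}\det(\Id-df^{n}_{p})=\operatorname{sign}\bigl[(1-\mu^{n/d})(1-\mu^{-n/d})\bigr]$. A short computation gives: a positive hyperbolic point ($\mu>0$) has index $-1$ for every iterate; an elliptic point ($|\mu|=1$, and $\mu^{n/d}\neq1$ by non-degeneracy) has index $+1$; and a negative hyperbolic point ($\mu<0$) has index $+1$ exactly when $n/d$ is odd. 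This is precisely where the odd-period hypothesis enters: if $n$ and $d$ are both odd then $n/d$ is odd, so along odd iterates negative hyperbolic points retain index $+1$ and are never counted with the sign of a positive hyperbolic point.

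Writing $a_{d},b_{d},c_{d}$ for the numbers of interior positive hyperbolic, negative hyperbolic, and elliptic periodic points of exact (odd) period $d$, and collecting the boundary contribution into $B(n)$, the Lefschetz identity becomes $-\sum_{d\mid n}a_{d}+\sum_{d\mid n}(b_{d}+c_{d})+B(n)=\chi(\Sigma)$ for every odd $n$. Now I would argue by contradiction: suppose $\Per^{\odd}(f)\cap\Per_{h+}(f)$ is finite, i.e.\ $\sum_{d:\odd}a_{d}<\infty$. Since $\Per^{\odd}(f)$ is infinite while the boundary carries only finitely many periodic points, the interior odd-period points are infinite, forcing $\sum_{d:\odd}(b_{d}+c_{d})=\infty$; choosing $n$ to be a least common multiple of sufficiently many odd periods makes $\sum_{d\mid n}(b_{d}+c_{d})$ arbitrarily large. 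But the identity rearranges to $\sum_{d\mid n}(b_{d}+c_{d})=\chi(\Sigma)+\sum_{d\mid n}a_{d}-B(n)$, whose right-hand side is bounded uniformly in $n$ (as $\chi(\Sigma)$ is constant and $\sum_{d\mid n}a_{d}\le\sum_{d:\odd}a_{d}<\infty$), a contradiction.

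The step I expect to be the main obstacle is the uniform bound on the boundary term $B(n)$. I would obtain it by observing that $\partial\Sigma$ is $f$-invariant, so every boundary periodic point is a periodic point of the circle diffeomorphism $f|_{\partial\Sigma}$; by non-degeneracy such points are hyperbolic on the circle, hence isolated, hence finite in total number, and each contributes index $0$ or $-1$. Consequently $|B(n)|$ is bounded, for all $n$, by the fixed finite number of boundary periodic points, which is exactly what the contradiction needs. The same counting shows that the positive hyperbolic points forced to exist are interior, as desired.
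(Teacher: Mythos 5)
Your proposal is correct and follows essentially the same route as the paper: both argue by contradiction via the Lefschetz fixed point theorem applied to $f^{N}$ for $N$ an odd product of odd periods, using that along odd iterates interior positive hyperbolic points have index $-1$ while negative hyperbolic and elliptic points have index $+1$, and that the boundary and positive hyperbolic contributions are uniformly bounded. Your version merely makes explicit the eigenvalue computation of $\operatorname{sign}\det(\Id-df^{n}_{p})$ and the finiteness of boundary periodic points, which the paper asserts without detail.
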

\begin{proof}[\bf Proof of Lemma \ref{lemma:odd}]
Suppose that $f$ admits infinite number of periodic points with odd period
 but only finite number of them are positive hyperbolic.
Let $K$ be the number of positive hyperbolic periodic points of $f$
 and periodic points in the boundary of $\Sigma$ with odd period.
Put $\Lambda_n=\Fix(f^n) \cap (\Per_{h+}(f) \cup \del \Sigma)$.
Then, we have
\begin{equation*}
 \sum_{p \in \Lambda_n}\ind(p,f^n) \geq -K
\end{equation*}
for any odd $n$.
Put $L=\sum_{i\geq 0} (-1)^i \dim H_i(S)$.
Since $f$ is isotopic to the identity map,
 the Lefschetz number of $f^n$ equals to $L$ for any $n \geq 1$.
There are infinitely many periodic points with odd period
 which are not positive hyperbolic
 and the boundary of $\Sigma$ contains only finitely many periodic points.
Hence, we can take periodic points $p_1,\dots,p_{K+L+1}$
 in $\Per^\odd(f) \setminus (\Per_{h+}(f) \cup \del \Sigma)$.
Let $N$ be the product of the periods of $p_1,\dots,p_{K+L+1}$
Then $N$ is odd and
 any point $p \in \Fix(f^N) \setminus \Lambda_n$
 satisfies $\ind(p,f^N)=1$
We have
\begin{align*}
 \sum_{p \in \Fix(f^N)} \ind(p,f^N)
 &= \sum_{p \in \Fix(f^N) \setminus \Lambda_n}\ind(p,f^N)
  +\sum_{p \in \Lambda_n} \ind(p,f^N)\\
 & \geq \sum_{i=1}^{K+L+1}\ind(p_j,f^N) +\sum_{p \in \Lambda_n} \ind(p,f^N)\\
 & \geq K+L+1-K \geq L+1.
\end{align*}
This contradicts to the Lefschetz fixed point theorem
 since the Lefschetz number of $f^N$ equals to $L$.
\end{proof}

\begin{lemma}
\label{lemma:odd 2}
Let $f$ be a non-degenerate diffeomorphism of compact surface $\Sigma$
 which is area-preserving on the interior and isotopic to the identity map.
Suppose that $\Per^\odd(f^2)$ is infinite
 then $\Per_{h+}(f)$ is infinite.
\end{lemma}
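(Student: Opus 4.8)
The plan is to deduce the statement by applying Lemma~\ref{lemma:odd} twice --- once to $f^2$ and, in a secondary case, to $f$ itself --- after a careful bookkeeping of how the dynamical type of a periodic point changes when one passes from $f$ to $f^2$.

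First I would check that $g:=f^2$ again satisfies the hypotheses of Lemma~\ref{lemma:odd}. It is area-preserving on the interior because $f$ is; it is isotopic to the identity (square the isotopy from $\Id$ to $f$); and it is non-degenerate, since a fixed point of $g^m=f^{2m}$ is a fixed point of $f^{2m}$, so $df^{2m}$ has no eigenvalue $1$ by the non-degeneracy of $f$. As $\Per^\odd(f^2)=\Per^\odd(g)$ is assumed infinite, Lemma~\ref{lemma:odd} gives that $\Per^\odd(g)\cap\Per_{h+}(g)$ is infinite.

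The heart of the argument is then to compare $\Per_{h+}(g)$ with $\Per_{h+}(f)$. Let $p\in\Per_{h+}(g)$ have $g$-period $n$ and $f$-period $k$; since $g^{n}(p)=f^{2n}(p)=p$ one has $n=k$ when $k$ is odd and $k=2n$ when $k$ is even. If $k=2n$ is even, then $df^{k}=dg^{n}$ has positive real eigenvalues, so $p\in\Per_{h+}(f)$. If $k=n$ is odd, then $dg^{n}=(df^{n})^{2}$ has positive real eigenvalues, hence the eigenvalues of $df^{n}$ are real of a common sign; thus $p$ is hyperbolic for $f$, either positive hyperbolic (so $p\in\Per_{h+}(f)$) or negative hyperbolic of odd period. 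Writing $N$ for the set of negative hyperbolic periodic points of $f$ of odd period, this shows $\Per_{h+}(g)\subseteq \Per_{h+}(f)\cup N$.

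Finally I would split into two cases according to which piece of $\Per_{h+}(f)\cup N$ carries infinitely many of the points produced in the second paragraph. If $\Per_{h+}(f)$ is infinite we are done immediately. Otherwise $N$ is infinite; but $N\subseteq\Per^\odd(f)$, so $\Per^\odd(f)$ is infinite, and a second application of Lemma~\ref{lemma:odd}, now to $f$ itself, yields that $\Per^\odd(f)\cap\Per_{h+}(f)$, and in particular $\Per_{h+}(f)$, is infinite. I expect the only delicate point to be the eigenvalue bookkeeping in the comparison step --- specifically, the fact that a positive hyperbolic point of $f^2$ of odd $f^2$-period may be \emph{negative} hyperbolic for $f$. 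This is exactly the obstruction that prevents one from concluding from the comparison alone, and it is what forces the second, bootstrapping application of Lemma~\ref{lemma:odd} to $f$.
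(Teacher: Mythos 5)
Your proposal is correct and is essentially the paper's proof: both apply Lemma~\ref{lemma:odd} to $f^2$, observe that a positive hyperbolic point of $f^2$ of odd $f^2$-period whose $f$-period is even (hence twice an odd number) is positive hyperbolic for $f$, and handle the remaining odd-$f$-period points by a second application of Lemma~\ref{lemma:odd} to $f$ itself. The only difference is organizational --- the paper splits at the outset on whether $\Per^{\odd}(f)$ is finite, while you first prove the inclusion $\Per_{h+}(f^2)\subseteq\Per_{h+}(f)\cup N$ and then split on which piece is infinite --- so the mathematical content coincides.
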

\begin{proof}[\bf Proof of Lemma \ref{lemma:odd 2}]
If $\Per^{\odd}(f)$ is infinite,
 then $\Per_{h+}(f)$ is infinite by Lemma \ref{lemma:odd} again.
Suppose that $\Per^{\odd}(f)$ is finite.
By Lemma \ref{lemma:odd}, the set $\Per_{h+}(f^2) \cap \Per^\odd(f^2)$
 is infinite.
This implies that
 $\Per_{h+}(f^2) \cap (\Per^\odd(f^2) \setminus \Per^\odd(f))$ is infinite.
The period of any point in 
 $\Per_{h+}(f^2) \cap (\Per^\odd(f^2) \setminus \Per^\odd(f))$
 is twice of an odd number,
 and hence, such a point is positively hyperbolic.
Hence, $\Per_{h+}(f)$ is infinite.
\end{proof}

Let $A$ be the annulus $A=S^1 \times [0,1]$
 and $\pi:\RR \times [0,1] \ra A$ the universal covering.
For a homeomorphism $\tilde{f}$ of $\RR \times [0,1]$
 and $\tilde{x} \in  \times [0,1]$,
 we define t{\it the translation number} $\tau(\tilde{x})$ by
\begin{equation*}
 \tau(\tilde{x})
=\lim_{n \ra \infty}\frac{\tilde{f}^n(\tilde{x})_1-\tilde{x}_1}{n}
\end{equation*}
 if the limit exists, where
 $\tilde{f}^n(\tilde{x})_1$ and $\tilde{x}_1$ are the first coordinates
 of $\tilde{f}^n(\tilde{x})$ and $\tilde{x}$.
For a homeomorphism $f$ of $A$ and $x \in A$,
 take lifts $\tilde{f}$ of $f$ and $\tilde{x}$ of $x$ to $\RR \times [0,1]$.
Then, the translation number $\tau(\tilde{x})$ modulo $\ZZ$
 does not depend on the choice of lift if it exists.
We define {\it the rotation number} $\rho(x)$ by
 $\rho(x)=\tau(\tilde{x})+\ZZ$.
To finding infinitely many periodic points with odd period,
 we use the following fixed point theorem by Franks.
\begin{thm}\cite[Cororraly 2.4]{Fr1}\cite[Theorem 2.1]{Fr2}
\label{thm:Franks}
Let $f$ be a homeomorphism of $A$ which is isotopic to 
 the identity map such that any point of $A$ is chain recurrent.
Suppose that a lift of $f$ to $\RR \times [0,1]$ admits
 points $\tilde{x}, \tilde{y} \in \RR \times [0,1]$
 such that the translation numbers $\tau(\tilde{x}), \tau(\tilde{y})$
 exists and $\tau(\tilde{x})<\tau(\tilde{y})$.
Then for any pair $(m,n) $of co-prime integers with $n \geq 1$
 and $\tau(\tilde{x})<m/n<\tilde{y}$,
 there exists $\tilde{x}_{m/n} \in \RR \times [0,1]$ 
 such that $\tilde{f}^n(\tilde{x}_{m/n})=T^m(\tilde{x}_{m/n})$,
 where $T:\RR \times [0,1] \ra \RR \times [0,1]$ is
 the translation given by$T(x,y)=(x+1,y)$,
In particular, $\pi(\tilde{x}_{m/n})$
 is a periodic point of $f$ whose period is $n$.
\end{thm}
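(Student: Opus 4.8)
The plan is to recast the search for a period-$n$ point with rotation number $m/n$ as a fixed point problem for a suitable lift, and then to invoke a Brouwer-theoretic fixed point theorem for chain-recurrent annulus homeomorphisms. First I would set $\tilde g = T^{-m}\circ \tilde f^{n}$. Because $T$ commutes with $\tilde f$, the map $\tilde g$ is again a lift of $g = f^{n}$, and a fixed point $\tilde z$ of $\tilde g$ satisfies $\tilde f^{n}(\tilde z) = T^{m}(\tilde z)$; projecting down, $\pi(\tilde z)$ is a fixed point of $f^{n}$. A short computation with deck transformations shows that the coprimality of $(m,n)$ forces the $f$-period of $\pi(\tilde z)$ to be exactly $n$: if $f^{d}(\pi(\tilde z)) = \pi(\tilde z)$ with $d \mid n$, then $\tilde f^{d}(\tilde z) = T^{m'}(\tilde z)$ for some integer $m'$, and iterating gives $T^{m}(\tilde z) = \tilde f^{n}(\tilde z) = T^{m'(n/d)}(\tilde z)$, so $(n/d) \mid m$; as $(n/d) \mid n$ and $\gcd(m,n)=1$, we get $n/d = 1$, i.e.\ $d = n$. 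Thus it suffices to produce one fixed point of $\tilde g$.

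Next I would record the sign data and transfer the recurrence hypothesis. Using $\tilde g^{k} = T^{-km}\tilde f^{nk}$ one finds $\tau_{\tilde g}(\tilde x) = n\tau(\tilde x) - m$ and $\tau_{\tilde g}(\tilde y) = n\tau(\tilde y) - m$, so the assumption $\tau(\tilde x) < m/n < \tau(\tilde y)$ translates into $\tau_{\tilde g}(\tilde x) < 0 < \tau_{\tilde g}(\tilde y)$: the lift $\tilde g$ has orbits drifting in both directions along the $S^{1}$-factor. I would also check that chain recurrence of the whole annulus passes to $g = f^{n}$: given an $\epsilon$-chain of $f$ from a point to itself, concatenating it $n$ times produces one whose length is a multiple of $n$, and reading off every $n$-th vertex yields a $\delta$-chain for $f^{n}$, the amplification of the jump error over $n$ steps being controlled by the uniform continuity of $f$. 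Hence every point is chain recurrent for $g$ as well.

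The core is then Franks' fixed point theorem: a lift $F$ of a chain-recurrent annulus homeomorphism isotopic to the identity, whose translation numbers take both signs, has a fixed point. I would argue by contradiction through Brouwer plane translation theory. If $\tilde g$ were fixed-point free, then, after collaring the boundary circles so that the map is defined on an open planar region and Brouwer's theorem applies, $\tilde g$ is a fixed-point-free orientation-preserving plane homeomorphism, and Franks' free-disk-chain lemma says such a map admits no \emph{periodic disk chain}, i.e.\ no cyclic family of free topological disks $D_{0}, \dots, D_{k} = D_{0}$ with $\tilde g^{j_{i}}(D_{i}) \cap D_{i+1} \neq \emptyset$ for some $j_{i} \geq 1$. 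On the other hand, the coexistence of left- and right-drifting orbits for $\tilde g$, together with chain recurrence of $g$ downstairs, allows one to string small free disks into a cyclic chain that returns to its starting disk upstairs; this is precisely a periodic disk chain, and the contradiction delivers the fixed point.

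The main obstacle is exactly this last construction: producing a genuine periodic disk chain from the recurrence and the straddling translation numbers. The subtlety is that a chain-recurrence loop in $A$ lifts to a path in $\RR \times [0,1]$ that need not close up, and one must use the orbits of positive and negative translation number to realize $S^{1}$-displacements of both signs and balance them so that the lifted chain returns to the same fundamental domain, while simultaneously keeping every disk free. Additional care is required at the boundary circles $S^{1} \times \{0,1\}$, which are invariant: one must either locate interior orbits of both drift signs or extend $\tilde g$ slightly past the boundary so that the Brouwer theory and the disk-chain lemma apply verbatim without creating spurious fixed points.
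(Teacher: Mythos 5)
This theorem is not proved in the paper at all: it is quoted verbatim from Franks, with the citations \cite{Fr1} (Corollary 2.4) and \cite{Fr2} (Theorem 2.1) standing in for a proof. What you have written is, in effect, a reconstruction of Franks' own derivation, and the reduction half of it is correct and complete: setting $\tilde g = T^{-m}\tilde f^{n}$, the deck-transformation computation $\tau_{\tilde g} = n\tau - m$ correctly converts the hypothesis $\tau(\tilde x) < m/n < \tau(\tilde y)$ into translation numbers of both signs for $\tilde g$; your coprimality argument that the period of $\pi(\tilde z)$ is exactly $n$ is the standard one and is right; and the fact that chain recurrence of every point passes from $f$ to $f^{n}$ on the compact annulus (subsampling a concatenated $\epsilon$-chain, with uniform continuity controlling the error over $n$ steps) is the standard fact $\mathcal{CR}(f^{n}) = \mathcal{CR}(f)$. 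This is exactly how \cite{Fr1} deduces Corollary 2.4 from the fixed point theorem there (Theorem 2.2), so your route coincides with the cited source's.

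The caveat is that the remaining ingredient --- the assertion that a fixed-point-free lift is incompatible with chain recurrence plus both-sign drift, via the construction of a periodic free disk chain --- is the entire mathematical content of Franks' fixed point theorem, and you explicitly flag it as an obstacle rather than carry it out. Stringing free disks into a chain that closes up in the universal cover, balancing positive and negative $S^{1}$-displacements, and handling the invariant boundary circles (Franks works on the open annulus; here $A = S^{1}\times[0,1]$, and in the paper's application the points $\tilde x,\tilde y$ of distinct translation number actually sit \emph{on} the boundary circles of the blow-up annulus) are precisely the delicate points of \cite{Fr1,Fr2}. So judged against the paper, which treats Franks' theorem as a black box, your argument is a complete and correct derivation modulo that quotable result; judged as a self-contained proof, the core construction is named but missing.
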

\begin{cor}
\label{cor:Franks}
Let $f$ be a homeomorphism of $A$ which is isotopic to 
 the identity map such that any point of $A$ is chain recurrent.
If there exists $x,y \in A$ such that $\rho(x) \neq \rho(y)$
 then, $f$ has infinitely many periodic points of odd period.
\end{cor}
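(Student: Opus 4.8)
The plan is to deduce the statement directly from Franks' fixed point theorem (Theorem~\ref{thm:Franks}) by producing, inside the relevant rotation interval, infinitely many rationals with odd denominator. First I would fix a lift $\tilde{f}$ of $f$ to $\RR\times[0,1]$ and lifts $\tilde{x},\tilde{y}$ of the given points $x,y$. Since the rotation number is by definition the translation number reduced modulo $\ZZ$, the hypothesis $\rho(x)\neq\rho(y)$ forces $\tau(\tilde{x})\neq\tau(\tilde{y})$: their difference is congruent to $\rho(x)-\rho(y)\not\equiv 0 \pmod{\ZZ}$, hence a non-integer, independently of the chosen lifts. After possibly interchanging the names of $x$ and $y$ we may assume $\tau(\tilde{x})<\tau(\tilde{y})$, so $f$ satisfies the hypotheses of Theorem~\ref{thm:Franks} with the nonempty open interval $I=(\tau(\tilde{x}),\tau(\tilde{y}))$.

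The key step is the elementary observation that $I$ contains coprime pairs $(m,n)$ with $m/n\in I$, $n$ odd, and with infinitely many distinct values of $n$. I would obtain these from the triadic rationals: for each $k$ the numbers $m/3^{k}$ are $3^{-k}$-spaced, so they eventually meet $I$, and whenever $3\nmid m$ the fraction $m/3^{k}$ is already in lowest terms with odd denominator $3^{k}$. A crude count shows that the number of $m$ with $m/3^{k}\in I$ is about $|I|\,3^{k}$, while those with $3\mid m$ number about $|I|\,3^{k-1}$; hence for all large $k$ there is some $m/3^{k}\in I$ with $3\nmid m$, i.e. with reduced denominator exactly $3^{k}$. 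This realises arbitrarily large odd denominators $n=3^{k}$ and furnishes the required infinite family.

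For each such pair Theorem~\ref{thm:Franks} produces a point $\tilde{x}_{m/n}$ with $\tilde{f}^{n}(\tilde{x}_{m/n})=T^{m}(\tilde{x}_{m/n})$, so that $\pi(\tilde{x}_{m/n})$ is a periodic point of $f$ of period exactly $n$, which is odd. Finally I would observe that two such points coming from different denominators $n$ have different minimal periods and are therefore genuinely distinct periodic points; since infinitely many odd $n$ occur, $\Per^{\odd}(f)$ is infinite, as claimed.

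I expect the only point requiring care, rather than a genuine obstacle, to be the insistence on \emph{odd} period: Theorem~\ref{thm:Franks} by itself only supplies rationals in $I$, so one must check that the sub-family with odd denominator remains infinite and attains infinitely many distinct denominators. The triadic rationals dispose of this cleanly, and the remainder is a direct application of Theorem~\ref{thm:Franks} together with the bookkeeping that distinct minimal periods yield distinct points.
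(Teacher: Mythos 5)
Your proposal is correct and is essentially the paper's argument: the paper states Corollary~\ref{cor:Franks} without proof as an immediate consequence of Theorem~\ref{thm:Franks}, the intended deduction being precisely to pass from $\rho(x)\neq\rho(y)$ to distinct translation numbers for a common lift and then to pick infinitely many reduced fractions $m/n$ with odd denominator in the interval $(\tau(\tilde{x}),\tau(\tilde{y}))$, each furnishing a periodic point of exact period $n$. Your triadic-rational construction $m/3^{k}$ with $3\nmid m$ and the observation that distinct minimal periods give distinct points are exactly the routine details the paper leaves to the reader.
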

\begin{rmk}
    See for the definition of chain recurrence \cite{Fr1}. Note that for a diffeomorphism $f$ on $\DD$ in Theorem \ref{thm:main}, any point in $\DD$ is chain recurrent. This follows immediately from the Poincare recurrence theorem.
\end{rmk}

Now, we prove Theorem \ref{thm:main}.
Let $f$ be an area preserving diffeomorphism of $\DD$ on the interior
 which is non-degenerate and is orientation preserving.
We show that $f$ or $f^2$ has infinitely many periodic points with odd period.
Then, Lemmas \ref{lemma:odd} and \ref{lemma:odd 2} imply that
 $f$ admits infinitely many positive hyperbolic periodic points.

Recall that
 the fixed point index of any possible fixed point on the bounday of $\DD$ 
 is $0$ or $-1$.
By the Lefschetz fixed point theorem,
 $f$ admits a fixed point $p_*$ in the interior of $\DD$
 with $\ind(p_*,f)=1$.
Take the blow-up annulus $A_{p_*}$at $p_*$
 and lift the diffeomorphism $f$ to
 a diffeomorphism $\hat{f}$ on $A_{p_*}$.
Let $\rho_D$ be the rotation number of $\hat{f}$ along the boundary
 component of $A_{p_*}$ which corresponds to the boundary of $\DD$
 and $\rho_{p_*}$ the rotation number of $\hat{f}$ along
 the boundary component of $A_{p_*}$
 which corresponds to $p_*$.
Since the fixed point index of $p_*$ is one,
 $p_*$ is either negative hyperbolic or elliptic.
We have $\rho_{p_*}=1/2$ in the former case
 and $\rho_{p_*}$ is irrational in the latter case.

The easiest case is that $\rho_D \neq \rho_{p_*}$.
In this case,
 Corollary \ref{cor:Franks} implies that
 $\hat{f}$, and hence, $f$ has infinitely many periodic points of odd period.

The second case is that $\rho_D= \rho_{p_*}$ and they are irrational.
By the assumption, $f$ has at least two periodic points.
Hence, there exists a periodic point $q_*$ of $f$
 different from $p_*$.
In the blow-up annulus $A_{p_*}$, the periodic point $q_*$ has
 rational rotation number for $\hat{f}$.
Since $\rho_D=\rho_{p_*}$ is irrational, we can apply
 Corollary \ref{cor:Franks}
 and obtain infinitely many periodic points of odd period.

The last case is that $\rho_D= \rho_{p_*}=1/2$.
In this case, $p_*$ is negative hyperbolic.
If $f$ has a fixed point $q_*$ different from $p_*$,
 then  the lift to the blow up annulus at $p_*$ has a fixed point $q_*$,
 whose rotation number is zero by definition,
 and the boundary components whose rotation number is $1/2$.
By Corollary \ref{cor:Franks},
 $\hat{f}$, and hence, $f$ has infinitely many periodic points of odd period.
Suppose that $f$ has no fixed point other than $p_*$.
Since $p_*$ is a positive hyperbolic fixed point of $f^2$,
 we have $\ind(p_*,f^2)=-1$.
Recall that the fixed point index of any fixed point
 in the boundary is non-positive.
By the Lefschetz fixed point theorem,
 $f$ must have a $2$-periodic point $r_*$ with $\ind(r_*,f^2)=1$.
The rotation number of $f^2$ along the boundary is $0$
 and the rotation number of the blow up of $f^2$ at $r_*$ is
 $1/2$ or irrational.
Therefore, $f^2$ has infinitely many periodic points with odd period
 by Corollary \ref{cor:Franks}.
Now, Lemma \ref{lemma:odd 2} completes the proof.

\section{Proof of Proposition \ref{main;prop}}
 In the assumptions, we may replace  the given Birkhoff section of disk type to one coming from a $J$-holomrphic plane as follows.  Let $\gamma$ be the simple orbit to which the given Birkhoff section is tangent. Note that by the assumption, any periodic orbit in $L(p,q))\backslash \gamma$ is not contractible in $L(p,q))\backslash \gamma$. 
According to \cite[Theorem 1.12, $\mathrm{i})\to \mathrm{iii})$]{HrLS}, $\gamma$ is $p$-unknotted and $\mathrm{sl}(\gamma)=-\frac{1}{p}$. In addition, the Conley-Zehnder index  of $\gamma^{p}$ with respect to a trivialization induced by a binding disk is at least 3.  
Now, we recall the proof of \cite[Theorem 1.12, $\mathrm{iii})\to \mathrm{i})$]{HrLS}. To explain it, we consider an almost complex structure $J$ on $\RR \times L(p,q)$ which 
satisfies $J\xi_{\mathrm{std}}=\xi_{\mathrm{std}}$, $J(\partial_{t})=X_{\lambda}$,  $d\lambda$-compatible and $\RR$-invariant,  where $t$ is the coordinate of $\RR$. Let  $\mathrm{pr}:\RR \times L(p,q) \to L(p,q)$ be the projection
In the proof, they find an almost complex structure $J$ as above and a $J$-holomorphic plane $h:(\CC,j)\to (\RR \times Y, J)$ such that $\mathrm{pr}\circ h(re^{2\pi t})\to \gamma(pT_{\gamma}t)$ as $r \to +\infty$  and in addition $\mathrm{pr}( \overline{h(\CC)})$ becomes a Birkhoff section. More precisely, there is  a $C^{1}$ Birkhoff section of disk type $u:\DD \to L(p,q)$ such that $\mathrm{pr}( \overline{h(\CC)})=u(\DD)$ as sets (see \cite[v1, Lemma C.3]{FHr}).

Having a $C^{1}$ Birkhoff section of disk type $u:\DD \to L(p,q)$ coming from a $J$-holomorphic plane, it follows from \cite[v1, Lemma C.6.]{FHr} that we can find  a  $C^{\infty}$ $\partial$-strong Birkhoff section $u':\DD \to L(p,q)$ which is arbitrary close to $u$ in $C^{1}$-topology. Which completes the proof of Proposition \ref{main;prop}. We note that although originally \cite[Lemma C.3]{FHr} and \cite[Lemma C.6.]{FHr} are discussed on $S^{3}$, we can apply the proofs to $L(p,q)$ in exactly the same way.

\end{document}